\newtheorem{definition}{Definition}[section]
\newtheorem{proposition}{Proposition}[section]
\newtheorem{theorem}[proposition]{Theorem}
\newtheorem{lemma}[proposition]{Lemma}
\newtheorem{remark}[proposition]{Remark}
\theoremstyle{definition}
\newtheorem{example}{Example}[section]
\numberwithin{equation}{section}
\DeclareMathOperator{\diam}{diam}
\title{{\bf Equivalence of Variants of Shadowing of Free Semigroup Actions}}   
\author{Pramod Kumar Das, Priyabrata Bag\\
Department of Mathematics, GITAM School of Science,\\
Gandhi Institute of Technology and Management,\\
Doddaballapur Taluk, Bengaluru, Karnataka 561203, India\\
pramod.math.ju@gmail.com; priyabrata.bag@gmail.com} 
\date{}
\begin{document}
\maketitle

\begin{abstract}
\noindent We prove that for finitely generated free semigroup actions the average shadowing property, the weak asymptotic average shadowing property, the mean ergodic shadowing property, the almost asymptotic average shadowing property, the asymptotic average shadowing property and the $M_{\alpha}$-shadowing property for every $\alpha\in (0,1)$, are equivalent. This gives an affirmative answer to an open question asked in Question 10.3 [M. Kulczycki, D. Kwietniak, P. Oprocha, On almost specification and average shadowing properties, Fundamenta Mathematicae, 224 (2014)].

\mbox{}\\

\noindent{\bf Keywords:} Shadowing, Average Shadowing, Ergodic Shadowing, Sub-shadowing.\\     
\mbox{}\\
\noindent{\bf Mathematics Subject Classification (2020): 37B65, 37B05}   
\end{abstract} 

\section{Introduction} 

\hspace{0.6 cm} A significant portion of the theory of topological dynamics (see \cite{AH}) deals with exploring the long-term behaviour of the systems controlled by a fixed rule. Such systems are known as autonomous dynamical systems. However, most dynamical systems describing real-world phenomena like population growth of species, spread of infectious diseases, pattern of heartbeats etc., are affected by finitely many different sets of rules over time. For this reason, it is important to understand the dynamics of finitely generated semigroup actions. Such studies also provide better insights into the behaviour of autonomous dynamical systems as explained by Theorem \ref{Autonomous}.  
\medskip

The shadowing property and its variants play a crucial role in the theory of topological dynamics (see \cite{AH}). The most extensively studied variant is known as the average shadowing property introduced in \cite{B}. Its role in understanding the chaotic behaviour of dynamical systems gave impetus to the introduction and investigation of several other variants of shadowing with average error in tracing (see \cite{WOC}), namely, the asymptotic average shadowing property (see \cite{G}), the weak asymptotic average shadowing property (see \cite{WOC}), the almost asymptotic average shadowing property (see \cite{GD}), the mean ergodic shadowing property (see \cite{DD}) and so on. It is the notion of sub-shadowing ($\underline{d}$-shadowing and $\overline{d}$-shadowing) (see \cite{DH}) which was introduced using the concept of density of subsets of natural numbers. Some research papers explored interrelations among some of these variants for autonomous dynamical systems. For instance, in \cite{WOC}, authors proved that the average shadowing property, the weak asymptotic average shadowing property, and the $M_{\alpha}$-shadowing property for every $\alpha\in [0,1)$, are equivalent. In \cite{WOC}, it was also proved that the asymptotic average shadowing property implies the average shadowing property. In \cite{DD}, authors proved that the mean ergodic shadowing property implies the $M^{\alpha}$-shadowing property for every $\alpha\in (0,1)$.    
\medskip

In recent years, researchers also studied (see \cite{B2015}, \cite{S}, \cite{ST}, \cite{N}, \cite{ZWFL}) the shadowing property and some of its variants for systems controlled by more than one rule of evolution. In \cite{S}, it was proved that any finitely generated semigroup with the ergodic shadowing property has the shadowing property. In \cite{ST}, authors proved that the $\underline{d}$-shadowing property, the average shadowing property, the weak specification property and the pseudo-orbital specification property are equivalent for a finitely generated semigroup with the shadowing property. In \cite{ZWFL}, it was proved that the asymptotic average shadowing property implies the average shadowing property for such systems.       
\medskip

Our purpose here is to prove the following result which shows that many of the variants of shadowing are equivalent for a free semigroup action of $\Gamma=\Gamma(f_0,f_1, \ldots, f_m)$ generated by the family of continuous self-maps $\mathcal{F}=\lbrace f_i: X\rightarrow X\rbrace_{i=0}^m$, where $f_0=id$ is the identity map on the compact metric space $X$.   

\begin{theorem} \label{MT} 
For a semigroup $\Gamma=\Gamma(f_0,f_1, \ldots, f_m)$ and $w\in \lbrace 1, \ldots, m\rbrace^{\mathbb{N}}$, the following are equivalent:   
\begin{itemize}
\item [(a)] $w$-average shadowing property,
\item [(b)] $w$-weak asymptotic average shadowing property,
\item [(c)] $w$-mean ergodic shadowing property,
\item [(d)] $w$-almost asymptotic average shadowing property, 
\item [(e)] $w$-$M_{\alpha}$-shadowing property for every $\alpha\in (0,1)$, 
\item [(f)] $w$-asymptotic average shadowing property.     
\end{itemize}
\end{theorem}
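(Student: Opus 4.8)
The plan is to establish a cycle of implications linking all six properties, leveraging the fact (stated in the introduction) that these equivalences are already known in the autonomous (single-map) setting. The core strategy is to reduce the free semigroup action along the word $w$ to an autonomous problem by encoding the sequence of maps $f_{w_1}, f_{w_2}, \ldots$ as a single dynamical system on a suitable space, or alternatively to transcribe each shadowing definition into a statement about average/asymptotic-average distances along $w$-orbits and then mimic, step by step, the autonomous proofs of Wu–Oprocha–Chen (\cite{WOC}) and Dong–Ding (\cite{DD}). Before writing anything, I would carefully write out the six definitions in the $w$-indexed form, paying attention to how the density of the "bad" index set and the Cesàro averages of tracing errors are phrased for a composition sequence rather than iterates of a fixed map.

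First I would prove the "easy" arc of the cycle, namely the implications that hold in one direction essentially by definition or by a density/averaging estimate: (f) $\Rightarrow$ (a) (asymptotic average shadowing implies average shadowing), (c) $\Rightarrow$ (e) (mean ergodic shadowing implies $M_\alpha$-shadowing for every $\alpha$), and the implications among (a), (b), (d) that follow from comparing the upper/lower densities and the Cesàro limits in their definitions. These steps amount to observing that a stronger control on the tracing error (asymptotic or ergodic) specializes to the weaker averaged control, and that bounding the density of the set where the error exceeds a threshold $\alpha$ is implied by the vanishing of the Cesàro average. I expect these to go through by the same elementary density arguments as in the autonomous case, with $f^n$ replaced throughout by the composition $f_{w_n}\circ\cdots\circ f_{w_1}$.

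The substantive direction is closing the cycle by proving that one of the weak properties — most naturally the $w$-average shadowing property (a) or the $w$-weak asymptotic average shadowing property (b) — implies the strongest one, the $w$-mean ergodic shadowing property (c), thereby forcing all six to coincide. Here I would adapt the autonomous construction in which, given an average pseudo-orbit, one builds a genuinely shadowing point by concatenating finite shadowed segments whose lengths and error tolerances are chosen so that both the upper density of the large-error set and its Cesàro average are driven to zero. The delicate bookkeeping is choosing a sequence of scales $\varepsilon_k \to 0$ together with block lengths $N_k$ growing fast enough that the accumulated tracing error along $w$ has vanishing average, while simultaneously the set of times where tracing fails has density zero; this simultaneous control of both the averaged error (for (a)/(b)/(f)) and the density of the bad set (for (c)/(e)) is exactly where the argument is most technical.

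The main obstacle I anticipate is that the composition operators $f_{w_n}\circ\cdots\circ f_{w_1}$ are not iterates of a single continuous map, so the uniform-continuity and compactness arguments that control error propagation in the autonomous case must be applied to the finite family $\mathcal{F}=\{f_i\}_{i=0}^m$ uniformly. Concretely, when I want the error after applying a block of maps to stay small, I must invoke uniform continuity of each of the finitely many generators and combine the moduli of continuity; because the family is finite this is available, but the estimates must be stated so that they hold along every prefix of $w$ rather than for powers of one map. I would isolate this as a preliminary lemma — a uniform-continuity/error-propagation estimate for finite compositions drawn from $\mathcal{F}$ — and then the remaining combinatorics of densities and Cesàro averages should parallel the single-map proofs closely enough to complete the cycle.
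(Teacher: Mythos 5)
There is a genuine gap at the heart of your plan: the mechanism you propose for the ``substantive direction'' --- building a point that shadows an ergodic/average pseudo orbit ``by concatenating finite shadowed segments whose lengths and error tolerances are chosen so that\ldots'' --- does not work under average-type hypotheses alone. A point $z$ that traces the first block on $[0,N_1)$ need not admit any true-orbit continuation tracing the second block, and without the plain shadowing or a specification-type property (which Theorem~\ref{MT} does \emph{not} assume; contrast \cite{ST}, where equivalences of this kind are proved \emph{under} the shadowing hypothesis) there is no gluing device to pass from finitely many separately shadowed segments to a single orbit. The paper's proof never glues orbits. Its only concatenation occurs in the \emph{contrapositive} of (b)$\Rightarrow$(d): one concatenates counterexample $(1/k,w)$-weak asymptotic average pseudo orbits $\beta^{(k)}$, with block lengths $m_n$ growing like $2^{M_n}$, into a single $w$-asymptotic average pseudo orbit that no point $(\epsilon/2,w)$-shadows on average. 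The positive implications are instead obtained by modifying \emph{pseudo-orbits}, not orbits: Lemma~\ref{epoapo} converts a $(\frac{\delta}{2},w)$-ergodic pseudo orbit into a $(\delta,w)$-average pseudo orbit agreeing with it off a density-zero set (this yields (a)$\Rightarrow$(b)); Remark~\ref{AE} (Walters) translates vanishing Ces\`aro averages into a density-zero bad set (yielding (c)$\Rightarrow$(b), and feeding the compactness argument for (c)$\Rightarrow$(f), where one extracts a limit of points $z_m$ shadowing within $\epsilon/2^m$); and Lemma~\ref{2.1.1} reformulates (c) as ``shadowed except on a set of upper density $<\epsilon$,'' after which (c)$\Leftrightarrow$(e) is a short computation with $\underline{d}(A)=1-\overline{d}(A^c)$. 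None of these tools appears in your outline, and without them (or the contrapositive concatenation) your cycle does not close.

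Two secondary problems. First, you file (f)$\Rightarrow$(a) under the ``easy arc,'' provable by ``elementary density arguments''; it is not --- an average pseudo orbit with small $\delta$ is not an asymptotic average pseudo orbit, so (f) does not apply to it directly, and the implication requires a genuine construction. The paper simply cites Theorem 1.2 of \cite{ZWFL} for it (in the autonomous case this is a nontrivial theorem of \cite{WOC}); your proposal leaves it unproved. Second, the uniform-continuity/error-propagation lemma for compositions drawn from $\mathcal{F}$ that you single out as the main technical obstacle is a red herring for this theorem: all six properties either sum the one-step errors $d(f_{w_j}(x_j),x_{j+1})$ or compare $f_w^j(z)$ with $x_j$ directly, so every estimate in the paper's proof is pure density/Ces\`aro bookkeeping (plus $\diam(X)<\infty$ and one compactness extraction), and no modulus of continuity of the generators ever enters. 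Likewise, your alternative of encoding $w$ into a single system (a skew product over the shift) does not reduce the fixed-$w$ statement to the autonomous one, since pseudo-orbits of the skew product may wander in the base while $(\delta,w)$-pseudo orbits follow the fixed itinerary $w$.
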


If $(X,f)$ is an autonomous dynamical system, its dynamics are the same as the dynamics of the free semigroup action of $\Gamma=\Gamma(f_0,f_1=f)$, if $w$ happens to be the constant sequence $\lbrace 1,1,1, \ldots \rbrace$.  Therefore, proving Theorem \ref{MT} also proves the following result:         

\begin{theorem}\label{Autonomous} 
Let $f: X\rightarrow X$ be a continuous map on a compact metric space $X$. Then the following are equivalent: \begin{itemize}
\item [(1)] The average shadowing property,
\item [(2)] The weak asymptotic average shadowing property,
\item [(3)] The mean ergodic shadowing property,
\item [(4)] The almost asymptotic average shadowing property,
\item [(5)] The $M_{\alpha}$-shadowing property for every $\alpha\in (0,1)$, 
\item [(6)] The asymptotic average shadowing property.  
\end{itemize} 
\end{theorem}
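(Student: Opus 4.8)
The plan is to obtain Theorem \ref{Autonomous} as an immediate specialization of Theorem \ref{MT}, exploiting precisely the observation recorded in the paragraph that precedes the statement. Concretely, I would take $m=1$, so that the generating family is $\mathcal{F}=\{f_0=\mathrm{id},\,f_1=f\}$ and the semigroup is $\Gamma=\Gamma(f_0,f_1)$, and I would fix the driving sequence $w=(1,1,1,\ldots)$, which in this case is the unique element of $\{1,\ldots,m\}^{\mathbb{N}}=\{1\}^{\mathbb{N}}$. The entire content of the argument is then to check that, under this choice, each of the six $w$-variants (a)--(f) of shadowing for the free semigroup action coincides with the corresponding classical variant (1)--(6) for the single map $f$. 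Once this dictionary is established, the chain of equivalences (1)$\Leftrightarrow\cdots\Leftrightarrow$(6) follows verbatim from (a)$\Leftrightarrow\cdots\Leftrightarrow$(f).

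The first step is to verify that the underlying dynamical objects degenerate correctly. For the constant sequence $w$, the $w$-orbit of a point $x$ is obtained by iterating the indexed maps along $w$, so that $x_{n}=f_{w_n}\circ\cdots\circ f_{w_1}(x)=f^{\,n}(x)$, which is exactly the forward $f$-orbit. Likewise a $w$-pseudo-orbit is a sequence $(x_n)$ whose one-step tracing errors are measured against $f_{w_{n+1}}(x_n)=f(x_n)$, so it reduces to an ordinary pseudo-orbit of $f$, and the requirement that $(x_n)$ be traced by a genuine $w$-orbit reduces to tracing by a genuine $f$-orbit. I would carry this out once, at the level of orbits, pseudo-orbits, and the error quantity $d\big(f_{w_{n+1}}(x_n),\,x_{n+1}\big)$, since each of the six definitions is assembled from these same ingredients.

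The second step is to run through the six definitions in turn and confirm that the density-, average-, and asymptotic-averaging conditions attached to the errors and to the tracing are untouched by the specialization, because those conditions involve only the real sequences of error and deviation values and not the index data of $\Gamma$. Thus the $w$-average shadowing property becomes the average shadowing property, the $w$-$M_{\alpha}$-shadowing property becomes the $M_{\alpha}$-shadowing property for every $\alpha\in(0,1)$, and similarly for the weak asymptotic average, mean ergodic, almost asymptotic average, and asymptotic average variants. I do not expect any genuine mathematical obstacle here: the only care required is bookkeeping, namely confirming that the definitions of the $w$-variants are phrased so that setting $w$ constant and $f_1=f$ literally returns the classical definitions, with no normalization or indexing convention smuggling in an extra factor. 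Once that is checked for all six items, Theorem \ref{Autonomous} is a one-line corollary of Theorem \ref{MT}.
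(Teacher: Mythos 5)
Your proposal is correct and matches the paper's own treatment exactly: the authors derive Theorem~\ref{Autonomous} in a single remark by specializing Theorem~\ref{MT} to $\Gamma=\Gamma(f_0,f_1=f)$ with the constant sequence $w=\{1,1,1,\ldots\}$, under which each $w$-variant literally reduces to the corresponding classical definition. Your minor indexing discrepancies (e.g., writing $f_{w_{n+1}}(x_n)$ where the paper uses $d(f_{w_j}(x_j),x_{j+1})$) are immaterial since $w$ is constant.
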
 

This result answers fully the question ``How is the AASP related to the average shadowing?'' asked in \cite{NWS}. It also give an affirmative answer to the question ``Does the average shadowing property implies the asymptotic average shadowing property?" asked in Question 10.3 \cite{KKO}. Earlier in \cite{WOC}, it was proved that the asymptotic average shadowing property implies the average shadowing property. In \cite{CT}, the equivalence between the asymptotic average shadowing property and the asymptotic average shadowing property was established for systems with those phase spaces which are complete with respect to the dynamical Besicovitch pseudometric. Further, this result shows that the properties listed in Theorem 5.5 of \cite{WOC} are also equivalent to the mean ergodic shadowing property (see \cite{DD}), the almost asymptotic average shadowing property (see \cite{GD}) and the asymptotic average shadowing property (see \cite{G}). It is clear from our proof that the mean ergodic shadowing property played an important role in establishing the equivalence among six of the variants of (average) shadowing.
\medskip

One may refer to Section~\ref{def-pre} for the definitions and preliminaries required to understand the results and discussion in Section~\ref{main}.  

\section{Definitions and Preliminaries} \label{def-pre}

In this section, we first describe the finitely generated free semigroup actions. Let us denote by $\mathbb{N}$ and $\mathbb{Z}^+$ respectively, the set of all natural numbers $\lbrace 0, 1, 2, 3, \ldots\rbrace$ and the set of all positive integers $\lbrace 1, 2, 3, \ldots\rbrace$.   
\medskip

Let us assume that $\Gamma=\Gamma(f_0,f_1, \ldots, f_m)$ be the semigroup generated by the finite family \(\mathcal{F}_1= \{f_0, f_1, \ldots , f_m\}\), where $f_0:X\rightarrow X$ is the identity map and \(f_i
: X \to X\), \(i \in \{1, \ldots , m\}\) are continuous self-maps on the compact metric space \(X\). Let us write \(\Gamma =\bigcup_{n\in\mathbb{N}} \mathcal{F}_n\), where \(\mathcal{F}_0 = \lbrace id\rbrace\) and $\mathcal{F}_n =\{f_{i_n}\circ\cdots\circ f_{i_2}\circ f_{i_1}$ $\mid$ $f_{i_j}\in \mathcal{F}_1\}$.  
Indeed, \(\mathcal{F}_n\) consists of elements that are concatenations of \(n\) elements of \(\mathcal{F}_1\).  
\medskip



The symbolic dynamics is a way to display the elements of the semigroup \(\Gamma\) associated with $\mathcal{F}_1$. Let \(\Sigma^m=\lbrace 1, \ldots, m\rbrace^{\mathbb{N}}\), i.e., $\Sigma^m$ is the set of all infinite sequences in $\lbrace 1, \ldots, m\rbrace$. For a sequence \(w = w_0w_1w_2\cdots\in \Sigma^m\), take \(f^0_w=id\) and for \(n\geq 1\), \(f^n_w= f_{w_{n-1}}\circ\cdots\circ f_{w_0}\). Let \(\mathcal{A}^m\) be the set of finite words of symbols \(\{1, \ldots , m\}\), i.e., if \(w \in\mathcal{A}^m\), then \(w = w_0 \cdots w_{l-1}\), where \(w_i \in \{1, \ldots , m\}\) for all \(i = 0,\ldots , l - 1\). Also, for \(0 \leq i \leq l - 1\), we denote \(f^i_w := f_{w_{i-1}} \circ\cdots\circ f_{w_0}\). 
\medskip

We now define five different types of pseudo orbits which are traced by true orbits to introduce six different variants of the shadowing property listed in Theorem \ref{MT} and Theorem \ref{Autonomous}.     

\begin{definition} 
Let \(\Gamma=\Gamma(f_0,f_1, \ldots, f_m)\), \(\xi=(x_j)_{j\in\mathbb{N}}\) a sequence in \(X\) and \(w=(w_j)_{j\in\mathbb{N}}\in\Sigma^m\). Let $f:X\rightarrow X$ be a continuous map on a compact metric space $X$. Assume that \(\delta\) is a positive real number. Then
\begin{itemize}
\item [(a)] \(\xi\) is said to be a \((\delta, w)\)-pseudo orbit (resp. $\delta$-pseudo orbit) for \(\Gamma\) (resp. for $f$) if \(d(f_{w_{j}}(x_{j}),x_{j+1})<\delta\) (resp. \(d(f(x_{j}),x_{j+1})<\delta\)), for all $j\in\mathbb{N}$. 
\item [(b)] \(\xi\) is said to be a \((\delta,w)\)-ergodic pseudo orbit (resp. $\delta$-ergodic pseudo orbit) for \(\Gamma\) (resp. for $f$) if \(d(f_{w_{j}}(x_{j}),x_{j+1})<\delta\) (resp. \(d(f(x_{j}),x_{j+1})<\delta\)), for all $j\in\mathbb{N}$ 
except for a set of density zero.
\item [(c)] \(\xi\) is said to be a \((\delta,w)\)-average pseudo orbit (resp. $\delta$-average pseudo orbit) for \(\Gamma\) (resp. for $f$) if there is $N>0$ such that for all $n\geq N$ and $k\in \mathbb{N}$,

\(\frac{1}{n}\sum\limits_{j=0}^{n-1}d(f_{w_{j}}(x_{j+k}), x_{j+k+1})<\delta\) (resp. \(\frac{1}{n}\sum\limits_{j=0}^{n-1}d(f(x_{j+k}), x_{j+k+1})<\delta\)).
\item [(d)] \(\xi\) is said to be a \((\delta,w)\)-weak asymptotic average pseudo orbit (resp. $\delta$-weak asymptotic average pseudo orbit) for \(\Gamma\) (resp. for $f$) if
		
\(\limsup\limits_{n\rightarrow \infty}\frac{1}{n}\sum\limits_{j=0}^{n-1}d(f_{w_j}(x_{j}), x_{j+1})<\delta\) (resp. \(\limsup\limits_{n\rightarrow \infty}\frac{1}{n}\sum\limits_{j=0}^{n-1}d(f(x_{j}), x_{j+1})<\delta\)). 
\item [(e)] \(\xi\) is said to be a \(w\)-asymptotic average pseudo orbit (resp. asymptotic average pseudo orbit) for \(\Gamma\) (resp. for $f$) if   
		
\(\lim\limits_{n\rightarrow \infty}\frac{1}{n}\sum\limits_{j=0}^{n-1}d(f_{w_j}(x_{j}), x_{j+1})=0\) (resp. \(\lim\limits_{n\rightarrow \infty}\frac{1}{n}\sum\limits_{j=0}^{n-1}d(f(x_{j}), x_{j+1})=0\)).
\end{itemize} 
\end{definition}
For $A\subset\mathbb{N}$, the upper density and the lower density of $A$ are respectively defined by
$$\overline{d}(A)=\limsup\limits_{n\rightarrow\infty}\frac{1}{n}
\left|A\cap\{0,\,1,\ldots,\,n-1\}\right|$$ and $$\underline{d}(A)=\liminf\limits_{n\rightarrow\infty}\frac{1}{n}
\left|A\cap\{0,\,1,\ldots,\,n-1\}\right|$$ where $|B|$ denotes the number of elements present in the set $B$. If there exists a number $d(A)$ such that
$\overline{d}(A)=\underline{d}(A)=d(A)$, then we say that the set $A$ has density $d(A)$. Fix $\alpha\in (0,1)$
and denote by $M_\alpha$ the family consisting
of sets $A\subset\mathbb{N}$ with $\underline{d}(A)>\alpha$. 
\begin{definition}
Let \(\Gamma=\Gamma(f_0,f_1, \ldots, f_m)\) and \(w=(w_j)_{j\in\mathbb{N}}\in\Sigma^m\). Let $f:X\rightarrow X$ be a continuous map on a compact metric space $X$. Then,
\begin{enumerate}
\item [(1)] \(\Gamma\) (resp. $f$) is said to have the \(w\)-mean ergodic shadowing property (resp. mean ergodic shadowing property) if for every \(\epsilon>0\), there is \(\delta>0\) such that every \((\delta, w)\)-ergodic pseudo orbit (resp. $\delta$-ergodic pseudo orbit) $(x_j)_{j\in\mathbb{N}}$ is \((\epsilon, w)\)-shadowed on average (resp. $\epsilon$-shadowed on average) by a point \(z\in X\), i.e., 
		
$\limsup\limits_{n\rightarrow\infty}\frac{1}{n}\sum\limits_{j=0}^{n-1}d(f_{w}^{j}(z), x_{j})<\epsilon$ (resp. $\limsup\limits_{n\rightarrow\infty}\frac{1}{n}\sum\limits_{j=0}^{n-1}d(f^{j}(z), x_{j})<\epsilon$). 
		
\item [(2)]  \(\Gamma\) (resp. $f$) is said to have the \(w\)-average shadowing property (resp. average shadowing property) if for every $\epsilon>0$, there is $\delta>0$ such that every $(\delta,w)$-average pseudo orbit (resp. $\delta$-average pseudo orbit) $(x_j)_{j\in\mathbb{N}}$ is $(\epsilon,w)$-shadowed on average (resp. $\epsilon$-shadowed on average) by a point $z\in X$, i.e., 
		
\(\limsup\limits_{n\rightarrow\infty}\frac{1}{n}\sum\limits_{j=0}^{n-1}d(f_{w}^{j}(z), x_{j})<\epsilon\) (resp. \(\limsup\limits_{n\rightarrow\infty}\frac{1}{n}\sum\limits_{j=0}^{n-1}d(f^{j}(z), x_{j})<\epsilon\)). 
		
\item [(3)] \(\Gamma\) (resp. $f$) is said to have the \(w\)-asymptotic average shadowing property (resp. asymptotic average shadowing property) if every \(w\)-asymptotic average pseudo orbit (resp. asymptotic average pseudo orbit) $(x_j)_{j\in\mathbb{N}}$ is \(w\)-asymptotically shadowed on average (resp. asymptotically shadowed on average) by a point $z\in X$, i.e.,
		
\(\lim\limits_{n\rightarrow\infty}\frac{1}{n}\sum\limits_{j=0}^{n-1}d(f_{w}^{j}(z), x_{j})=0\) (resp. \(\lim\limits_{n\rightarrow\infty}\frac{1}{n}\sum\limits_{j=0}^{n-1}d(f^{j}(z), x_{j})=0\)). 
		
\item [(4)] \(\Gamma\) (resp. $f$) is said to have the \(w\)-weak asymptotic average shadowing property (resp. weak asymptotic average shadowing property) if for every $\epsilon>0$, any \(w\)-asymptotic average pseudo orbit (resp. asymptotic average pseudo orbit) $(x_j)_{j\in\mathbb{N}}$ is $(\epsilon, w)$-shadowed on average (resp. $\epsilon$-shadowed on average) by a point $z\in X$, i.e.,
		
\(\limsup\limits_{n\rightarrow\infty}\frac{1}{n}\sum\limits_{j=0}^{n-1}d(f_{w}^{j}(z), x_{j})<\epsilon\) (resp. \(\limsup\limits_{n\rightarrow\infty}\frac{1}{n}\sum\limits_{j=0}^{n-1}d(f^{j}(z), x_{j})<\epsilon\)).
		
\item [(5)] \(\Gamma\) (resp. $f$) is said to have the $w$-almost asymptotic average shadowing property (resp. almost asymptotic average shadowing property) if for every $\epsilon>0$, there is $\delta>0$ such that every \((\delta,w)\)-weak asymptotic average pseudo orbit (resp. $\delta$-weak asymptotic average pseudo orbit) $(x_j)_{j\in\mathbb{N}}$ is $(\epsilon, w)$-shadowed on average (resp. $\epsilon$-shadowed on average) by a point $z\in X$, i.e.,
		
\(\limsup\limits_{n\rightarrow\infty}\frac{1}{n}\sum\limits_{j=0}^{n-1}d(f_{w}^{j}(z), x_{j})<\epsilon\) (resp. \(\limsup\limits_{n\rightarrow\infty}\frac{1}{n}\sum\limits_{j=0}^{n-1}d(f^{j}(z), x_{j})<\epsilon\)).
		
\item [(6)] $\Gamma$ (resp. $f$) is said to have the $w$-$M_{\alpha}$-shadowing property (resp. $M_{\alpha}$-shadowing property) if for every $\epsilon>0$, there is $\delta>0$ such that every ($\delta$,$w$)-ergodic pseudo orbit (resp. $\delta$-ergodic pseudo orbit) $\xi=(x_j)_{j\in\mathbb{N}}$ is $M_{\alpha}$-($\epsilon$,$w$)-shadowed (resp. $M_{\alpha}$-$\epsilon$-shadowed) by a point $z\in X$, i.e., $B(z,\xi,w,\epsilon)\in M_{\alpha}$ (resp. $B(z,\xi,\epsilon)\in M_{\alpha}$), where $B(z,\xi,w,\epsilon)=\lbrace j\in\mathbb{N}$ $\mid$ $d(f_w^j(z),x_{j})<\epsilon\rbrace$ (resp. $B(z,\xi,\epsilon)=\lbrace j\in\mathbb{N}$ $\mid$ $d(f^j(z),x_{j})<\epsilon\rbrace$). 
		
	\end{enumerate}
\end{definition} 

\section{Variants of Shadowing and Their Equivalence} \label{main}    

The main result of this paper in the form of the Theorem \ref{MT} proves that the $w$-average shadowing property, the $w$-weak asymptotic average shadowing property, the $w$-mean ergodic shadowing property, the $w$-almost asymptotic average shadowing property, the $w$-$M_{\alpha}$-shadowing property for every $\alpha\in (0,1)$ and the $w$-asymptotic average shadowing property are equivalent for a finitely generated free semigroup action.
\medskip

We first discuss two lemmas and one remark which will be used in proving the main theorem.          

\begin{lemma} 
Let \(\Gamma=\Gamma(f_0,f_1, \ldots, f_m)\) and \(w=(w_j)_{j\in\mathbb{N}}\in\Sigma^m\). Then, \(\Gamma\) has the $w$-mean ergodic shadowing property if and only if for every $\epsilon>0$, there is $\delta>0$ such that every ($\delta$,$w$)-ergodic pseudo orbit is ($\epsilon$,$w$)-shadowed except on a set of upper density $\epsilon$.        
\label{2.1.1} 
\end{lemma}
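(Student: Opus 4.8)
The plan is to fix, for any candidate shadowing point $z$ and any pseudo orbit $\xi=(x_j)_{j\in\mathbb{N}}$, the non-negative bounded sequence $a_j:=d(f_w^j(z),x_j)$, which satisfies $0\le a_j\le \diam(X)=:D$ by compactness of $X$, and to connect the Ces\`aro averages $\frac{1}{n}\sum_{j=0}^{n-1}a_j$ with the upper densities of the superlevel sets $\{j:a_j\ge\epsilon\}$ via two elementary Markov-type inequalities. Both implications are then obtained by suitably rescaling the tracing constant.

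For the forward direction (the $w$-mean ergodic shadowing property implies the density statement), given $\epsilon>0$ I would apply the $w$-mean ergodic shadowing property with the smaller tracing constant $\epsilon^2$ to obtain $\delta>0$ such that every $(\delta,w)$-ergodic pseudo orbit is $\epsilon^2$-shadowed on average by some $z$, that is, $\limsup_n\frac{1}{n}\sum_{j=0}^{n-1}a_j<\epsilon^2$. The key observation is the bound
$$\frac{\epsilon}{n}\,\bigl|\{0\le j<n:a_j\ge\epsilon\}\bigr|\le\frac{1}{n}\sum_{j=0}^{n-1}a_j,$$
so taking $\limsup$ gives $\overline{d}(\{j:a_j\ge\epsilon\})\le\frac{1}{\epsilon}\limsup_n\frac{1}{n}\sum_{j=0}^{n-1}a_j<\epsilon$; in other words $z$ $(\epsilon,w)$-shadows $\xi$ except on a set of upper density $\epsilon$.

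For the reverse direction, given $\epsilon>0$ I would set $\epsilon':=\frac{\epsilon}{2(1+D)}$ and apply the hypothesis to produce $\delta>0$ so that every $(\delta,w)$-ergodic pseudo orbit admits $z$ with $\overline{d}(\{j:a_j\ge\epsilon'\})\le\epsilon'$. Splitting the partial average according to whether $a_j<\epsilon'$ or $a_j\ge\epsilon'$, bounding the small block by $\epsilon'$ and each large term by $D$, one gets
$$\frac{1}{n}\sum_{j=0}^{n-1}a_j\le\epsilon'+\frac{D}{n}\,\bigl|\{0\le j<n:a_j\ge\epsilon'\}\bigr|,$$
whence $\limsup_n\frac{1}{n}\sum_{j=0}^{n-1}a_j\le\epsilon'(1+D)=\epsilon/2<\epsilon$, i.e.\ $z$ $(\epsilon,w)$-shadows $\xi$ on average, which is exactly the $w$-mean ergodic shadowing property.

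I expect no serious obstacle here: both implications reduce to the two displayed Markov-type inequalities, and the only care needed is in the bookkeeping of the rescaled constants (using $\epsilon^2$ in one direction and $\epsilon/(2(1+D))$ in the other, where the diameter $D$ enters through the crude bound $a_j\le D$ on the exceptional indices) so as to convert faithfully between the average tracing error and the upper density of the exceptional set and to secure the strict inequalities demanded by the definitions.
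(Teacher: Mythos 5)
Your proposal is correct and follows essentially the same route as the paper: the forward direction uses the identical trick of invoking the $w$-mean ergodic shadowing property with tracing constant $\epsilon^2$ and then applying the Markov-type inequality $\epsilon\,\overline{d}(\{j : a_j \geq \epsilon\}) \leq \limsup_n \frac{1}{n}\sum_{j=0}^{n-1} a_j$, while the reverse direction is the paper's argument with a cosmetically different constant (the paper takes $\eta < \epsilon/(\diam(X)+1)$ where you take $\epsilon' = \epsilon/(2(1+\diam(X)))$, both splitting the average over the exceptional set bounded by $\diam(X)$ and its complement bounded by the tracing constant). Your bookkeeping of the strict inequalities is sound, so there is nothing to repair.
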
 

\begin{proof}
Suppose that $\Gamma$ has the $w$-mean ergodic shadowing property. For every $\epsilon>0$, there is $\delta>0$ such that if $\xi=(x_j)_{j\in\mathbb{N}}$ is a $\delta$-ergodic pseudo orbit, then there is $z\in X$ such that

$\limsup\limits_{n\to\infty}\frac{1}{n}\sum\limits_{j=0}^{n-1}d(f^j_{w}(z),x_j)<\epsilon^2.$ 
	
If $E=\lbrace j\in\mathbb{N}\mid d(f^j_{w}(z),x_j)\geq \epsilon\rbrace$, then we have

$$\epsilon^2>\limsup\limits_{n\to\infty}\frac{1}{n}\sum\limits_{j=0}^{n-1}d(f^j_{w}(z),x_j)\geq \limsup\limits_{n\to\infty}\frac{\epsilon}{n}(|E\cap \lbrace 0,1,\ldots,n-1\rbrace|)=\epsilon\overline{d}(E).$$
This implies that $\overline{d}(E)<\epsilon$. 
\medskip
	
Conversely, fix $\epsilon>0$ and choose $\eta<\frac{\epsilon}{\diam(X)+1}$. Let $\delta>0$ be such that if $\xi=(x_j)_{j\in\mathbb{N}}$ is a $\delta$-ergodic pseudo orbit, then $\xi$ is $\eta$-shadowed except on a set of upper density less than $\epsilon$, by some point $z$ in $X$. 
	
If $E=\lbrace j\in\mathbb{N}\mid d(f^j_{w}(z),x_j)\geq \eta\rbrace$, then we have $\overline{d}(E)<\eta$ and

$$\limsup\limits_{n\to\infty}\frac{1}{n}\sum_{j=0}^{n-1} d(f^j_{w}(z),x_j)
\leq \limsup\limits_{n\to\infty}\frac{1}{n}(\diam(X)|E\cap \lbrace 0,1,\ldots,n-1\rbrace|+\eta n)$$
$$\leq \diam(X)\overline{d}(E)+\eta<\epsilon.$$
	
This implies that $\Gamma$ has the $w$-mean ergodic shadowing property.  
\end{proof}

In reference to the above lemma, the following definition of the $w$-mean ergodic shadowing property is equivalent to the earlier definition and is useful for our study.

\begin{definition}
Let \(\Gamma=\Gamma(f_0,f_1, \ldots, f_m)\) and \(w=(w_j)_{j\in\mathbb{N}}\in\Sigma^m\). Then, \(\Gamma\) satisfies the $w$-mean ergodic shadowing property if for every \(\epsilon>0\), there is \(\delta>0\) such that if \(\xi=(x_j)_{j\in\mathbb{N}}\) is a \((\delta,w)\)-ergodic pseudo orbit, then there is \(z\) in \(X\) such that \(\overline{d}(B^c(z,\xi,w,\epsilon))<\epsilon\), where \(B^c(z,\xi,w,\epsilon)=\{j\in\mathbb{N}\;|\;d(f^j_{w}(z),x_j)\geq \epsilon\}.\)  
\end{definition} 

\begin{lemma}
	\label{epoapo} 
	Let \(\Gamma=\Gamma(f_0,f_1, \ldots, f_m)\) and \(w=(w_j)_{j\in\mathbb{N}}\in\Sigma^m\). Then, for any $\delta>0$ and any $(\frac{\delta}{2},w)$-ergodic pseudo orbit $\xi=\lbrace x_i\rbrace_{i\in\mathbb{N}}$, there exists a $(\delta,w)$-average pseudo orbit $\lbrace y_i\rbrace_{i\in\mathbb{N}}$ such that the set $\lbrace i\in\mathbb{N}$ $\mid$ $x_i\neq y_i\rbrace$ has density zero.   
\end{lemma}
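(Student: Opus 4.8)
The plan is to start with a $(\frac{\delta}{2},w)$-ergodic pseudo orbit $\xi=\{x_i\}_{i\in\mathbb{N}}$, so that the ``bad'' set $E=\{i\in\mathbb{N}\mid d(f_{w_i}(x_i),x_{i+1})\geq \frac{\delta}{2}\}$ has density zero. I want to surgically repair $\xi$ on the indices of $E$ to obtain a sequence $\{y_i\}$ that is a genuine $(\delta,w)$-average pseudo orbit, while changing $\xi$ only on a density-zero set. The natural repair is to leave $y_i=x_i$ whenever $i\notin E$ and, for $i\in E$, replace the broken jumps by actual images under the generators so that the local error drops below $\frac{\delta}{2}$; the simplest choice is to set $y_{i+1}=f_{w_i}(y_i)$ along maximal runs of indices sitting in $E$, thereby forcing $d(f_{w_i}(y_i),y_{i+1})=0$ on those indices. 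Since $\{i\mid x_i\neq y_i\}\subseteq E\cup(E-1)$ (a change can only propagate to the index immediately after a repaired jump), and a density-zero set remains density zero after such a bounded shift and finite union, the condition on $\{i\mid x_i\neq y_i\}$ is automatic.

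The core of the argument is then to verify that $\{y_i\}$ is a $(\delta,w)$-average pseudo orbit, i.e.\ that there is $N>0$ with $\frac{1}{n}\sum_{j=0}^{n-1}d(f_{w_{j+k}}(y_{j+k}),y_{j+k+1})<\delta$ for all $n\geq N$ and all $k\in\mathbb{N}$. At an index $i$ where the orbit has just been repaired or where $i\notin E$, the local error is at most $\frac{\delta}{2}$ by construction or by the ergodic-pseudo-orbit hypothesis. The only genuinely large local errors occur at a bounded set of ``boundary'' indices where a repaired run meets the original sequence, and the number of such indices in any window of length $n$ is controlled by $|E\cap\{k,\ldots,k+n-1\}|$. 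The key estimate is that one can split the running average into a part bounded by $\frac{\delta}{2}$ coming from the good local errors and a part bounded by $\frac{\diam(X)}{n}\,|E\cap\{k,\ldots,k+n-1\}|$ coming from the boundary indices; since $\overline d(E)=0$, for $n$ large (uniformly in $k$) the second term is below $\frac{\delta}{2}$, giving the required bound.

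The subtle point to handle carefully is \emph{uniformity in $k$}: the definition of an average pseudo orbit demands a single $N$ working for every shift $k$, whereas $d(E)=0$ only gives $\limsup_{n}\frac{1}{n}|E\cap\{0,\ldots,n-1\}|=0$ for windows anchored at $0$. The hard part will therefore be to promote the density-zero condition to a uniform statement of the form: for every $\gamma>0$ there is $N$ with $|E\cap\{k,\ldots,k+n-1\}|<\gamma n$ for all $n\geq N$ and all $k\in\mathbb{N}$. I expect to obtain this from the elementary fact that a set of density zero is \emph{Banach density zero} in the required uniform sense only after ensuring $E$ has the appropriate structure; if that implication fails for a general density-zero set, I would instead argue directly by noting that the repaired sequence makes the \emph{summed} local error along any window exceed $\frac{\delta}{2}n$ only by an amount proportional to the number of boundary indices, and absorb this using that long windows dilute the finitely-clustered exceptional indices. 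Once uniformity is secured, the remaining computations are the routine triangle-inequality and counting estimates sketched above, and the lemma follows.
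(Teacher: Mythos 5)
You have correctly isolated the crux --- the definition of a $(\delta,w)$-average pseudo orbit demands a single $N$ valid for \emph{every} window position $k$, while $d(E)=0$ only controls windows anchored at $0$ --- but neither of your proposed resolutions closes the gap. The implication you first hope for is false: a density-zero set need not have uniform (Banach) density zero. For instance, $E=\bigcup_{n\geq 1}\{\,i \text{ even}\;:\; 2^n\leq i<2^n+2n\,\}$ has density zero, yet the window of length $2n$ anchored at $2^n$ meets $E$ in $n$ points, so $\sup_k \frac{1}{n}|E\cap[k,k+n)|$ does not tend to $0$. Your construction repairs along maximal runs of $E$, so the large ``boundary'' errors sit at the ends of those runs; with $E$ as above each bad index is its own run, and one can build a $(\frac{\delta}{2},w)$-ergodic pseudo orbit for which roughly half the indices of such a window carry a boundary error close to $\diam(X)$. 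For that $k$ and $n$ the window average is about $\frac{1}{2}\diam(X)$, which exceeds $\delta$ when $\delta$ is small, and this recurs for arbitrarily large $n$, so no choice of $N$ rescues it: your $\{y_i\}$ is simply not a $(\delta,w)$-average pseudo orbit in general. The fallback (``long windows dilute the finitely-clustered exceptional indices'') fails for the same reason --- density zero places no constraint whatsoever on how the exceptional indices cluster inside far-out windows.

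The paper's proof fixes exactly this by making the sparseness of the boundary errors a \emph{feature of the construction} rather than an inherited property of $E$. Fix $M$ with $\frac{\diam(X)}{M}<\frac{\delta}{8}$, set $k_0$ equal to the first bad index and $k_{n+1}=\min\{l\in E : l\geq k_n+M\}$, and replace $\{x_i\}$ on each block $[k_n,k_n+M)$ of \emph{fixed length} $M$ by the genuine orbit segment $y_i=f_w^{i-k_n}(x_{k_n})$. Every index of $E$ is swallowed by some block, the only possibly-large errors of $\{y_i\}$ occur at block ends, and --- this is the point --- block starts are at least $M$ apart by construction, so every window of length $n\geq M$ contains at most $\frac{2(n+M)}{M}\leq\frac{4n}{M}$ such indices, \emph{uniformly in} $k$. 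Each window average is then at most $\frac{\delta}{2}+\frac{4}{M}\diam(X)<\delta$, and the altered set is contained in $\bigcup_{i=0}^{M-1}(J+i)$ with $J=\{k_n\}\subseteq E$, a finite union of shifts of a density-zero set, hence of density zero. If you replace your run-based repair with this fixed-length, $M$-separated block repair, the rest of your triangle-inequality and counting estimates go through essentially verbatim.
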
 

\begin{proof}
	Since $X$ is a compact metric space, $\diam(X)$ is finite and therefore, one can choose a natural number $M$ (sufficiently large) such that $\frac{\diam(X)}{M}<\frac{\delta}{8}$. If $d(f_{w_i}(x_i),x_{i+1})<\frac{\delta}{2}$ for all but finitely many $i\in\mathbb{N}$, then $\lbrace x_i\rbrace_{i\in\mathbb{N}}$ itself is the desired $(\delta,w)$-average pseudo orbit.   
	\medskip
	
	Let us now assume that $( n_i)_{i\in\mathbb{N}}$ is a strictly increasing sequence of natural numbers such that $B^c(\xi,w,\frac{\delta}{2})=\lbrace i\in\mathbb{N}$ $\mid$ $d(f_{w_i}(x_i),x_{i+1})\geq \frac{\delta}{2}\rbrace=\lbrace n_i$ $\mid$ $i\in\mathbb{N}\rbrace$. Let us define $( k_i)_{i\in\mathbb{N}}$ as follows: 
		
	$k_0=n_0$ and for $n\in\mathbb{N}$, $k_{n+1}=$ min $\lbrace l\in B^c(\xi,w,\frac{\delta}{2})$ $\mid$ $l\geq k_n+M\rbrace$.  
	
	If $J=\lbrace k_n$ $\mid$ $n\in\mathbb{N}\rbrace$, then $\mathcal{J}=\bigcup\limits_{i=0}^{M-1} (J+i)$ satisfies the following conditions:
	(a) $\mathcal{J}$, $\mathcal{J}+1$, $\mathcal{J}+2$ $\ldots$, $\mathcal{J}+M-1$ are disjoint, (b) $B^c(\xi,w,\frac{\delta}{2})\subset \mathcal{J}$, and (c) $\mathcal{J}$ has density zero.   
	\medskip
	
	Let us define $\lbrace y_i\rbrace_{i\in\mathbb{N}}$ as follows: 
	
	$$y_i=\begin{cases}
		f_{w}^{i-k_n}(x_{k_n}) & \mbox{if }  i\in [k_n,k_n+M) \mbox{ for some } n\in\mathbb{N} \\
		x_i & \mbox{otherwise.}   
	\end{cases}$$ 
	
	If for $k\geq 0$ and $n>0$, we have $A_k^n=\lbrace i\in [k,k+n)$ $\mid$ $d(f_{w_i}(y_i),y_{i+1})\geq \frac{\delta}{2}\rbrace$, then observe that $|A_k^n|\leq \frac{2(n+M)}{M}$. If we prove that $\lbrace y_i\rbrace_{i\in\mathbb{N}}$ is a $(\delta,w)$-average pseudo orbit, then we are through because $\lbrace i\in\mathbb{N}$ $\mid$ $x_i\neq y_i\rbrace\subset \mathcal{J}$.   
	
	Fix $n\geq M$ and $k\geq 0$. 
	
	If $[k,k+n)\cap \mathcal{J}=\phi$, then it is clear that $\frac{1}{n} \sum\limits_{i=0}^{n-1} d(f_{w_i}(y_{i+k}),y_{i+k+1})<\frac{\delta}{2}<\delta$. 
	
	If $[k,k+n)\cap \mathcal{J}\neq \phi$, then 
	
	$|\lbrace i\in [k,k+n)$ $\mid$ $d(f_{w_i}(y_i),y_{i+1})\geq \frac{\delta}{2}\rbrace|=|A_k^n|\leq \frac{2(n+M)}{M}\leq \frac{4n}{M}.$ 
    
    Hence, 
	$\frac{1}{n}\sum\limits_{i=0}^{n-1} d(f_{w_i}(y_{i+k}),y_{i+k+1})=\frac{1}{n} \sum\limits_{i\in A_k^n} d(f_{w_i}(y_i),y_{i+1})+\frac{1}{n}\sum\limits_{i\in [k,k+n)\setminus A_k^n} d(f_{w_i}(y_i),y_{i+1})$\\
	$\leq \frac{|A_k^n|}{n} \diam(X)+\frac{\delta}{2}\leq \frac{4}{M}\diam(X)+\frac{\delta}{2}<\delta$.  
\end{proof} 

\begin{remark} \label{AE}   
	\cite[Theorem 1.20]{W} If $(a_n)_{n\in\mathbb{N}}$ is a bounded sequence of non-negative real numbers then the following are equivalent:
	\begin{enumerate}
		\item [(a)] $\lim\limits_{n\to\infty} \frac{1}{n} \sum\limits_{i=0}^{n-1}  a_{i}=0$.
		\item [(b)] There exists $J\subset \mathbb{N}$ with $d(J)=0$ such that $\lim\limits_{n\rightarrow \infty}a_{n}=0$, provided $n\notin J$. 
	\end{enumerate}
\end{remark}  

With the above setup, we are now in a stage to prove the main result of this paper.  
\\

\noindent\textbf{Proof of Theorem~\ref{MT}:} 


\medskip

We first obtain that (b), (c) and (d) are equivalent by proving that (b)$\Rightarrow$(d), (d)$\Rightarrow$(c) and (c)$\Rightarrow$(b).   
\medskip

(b)$\Rightarrow$(d) Suppose that $\Gamma$ does not have the $w$-almost asymptotic average shadowing property. Then there exists $\epsilon>0$ such that for any $k\in \mathbb{N}$,
there exists a $(1/k,w)$-weak asymptotic average pseudo orbit
\(\beta^{(k)}=(\beta^{(k)}_{i})_{i\in\mathbb{N}}\) such that
\begin{equation}\label{NotShadowed}
	\limsup_{n\rightarrow\infty}
	\frac{1}{n}\sum_{i=0}^{n-1}d(f^{i}_{w}(z), \beta^{(k)}_{i})\geq \epsilon,\ \ \forall z\in X
\end{equation}

If we can construct a $w$-asymptotic average pseudo orbit
which is not $(\frac{\epsilon}{2},w)$-shadowed on average by any
point in $X$, then we are through.  
\medskip

Since $\beta^{(k)}$ is a
($1/k$,$w$)-weak asymptotic average pseudo orbit, there exists $N_{k}\in \mathbb{N}$
such that for all $n\geq N_{k}$, we have $\frac{1}{n}\sum\limits_{j=0}^{n-1}d(f_{w_j}(\beta_{j}^{(k)}), \beta_{j+1}^{(k)})<\frac{1}{k}.$  Without loss of generality, assume that $\{N_{k}\}_{k\in\mathbb{N}}$ 
is a strictly increasing sequence.  

We now set $m_{1}=2^{N_{2}}$ and define $\lbrace m_i\rbrace_{i\in\mathbb{Z}^+}$ using the method of induction. Suppose that we have already
defined $m_n$ for some $n\geq 1$. By \eqref{NotShadowed}, for
any $z\in X$ and any $k,N>0$ there exist $L>N$
and $\gamma>0$ such that if $d(z,y)<\gamma$,
$$\frac{1}{L+1}\sum_{i=0}^{L}d(f_{w}^{i}(y), \beta^{(k)}_{i})\geq \frac{\epsilon}{2}.$$

By the compactness of $X$, there are
$k_{n+1}\in\mathbb{N}$ and positive integers $L^{(n+1)}_{1}, \ldots, L^{(n+1)}_{k_{n+1}}\geq 2^{(n+1)m_n} 
$ such that for any $z\in X$ there is
$1\leq i \leq k_{n+1}$ satisfying
\begin{equation}\label{Exists_z}
	\frac{1}{L^{(n+1)}_{i}+1}\sum_{j=0}^{L^{(n+1)}_{i}}d(f_{w}^{j}(z),
	\beta_{j}^{(n+1)})\geq \frac{\epsilon}{2}.
\end{equation}

We denote $m_{n+1}=\max\left\{2^{N_{n+2}}, L^{(n+1)}_{1},
L^{(n+1)}_{2}, \ldots, L^{(n+1)}_{k_{n+1}}\right\}
$
and consider 
$
\xi=\left\{x_i\right\}_{i=0}^{\infty}=\beta_{0}^{(1)}
\beta_{1}^{(1)}\cdots\beta_{m_{1}}^{(1)}
\beta_{0}^{(2)}\beta_{1}^{(2)}\cdots
\beta_{m_{2}}^{(2)}\cdots\beta_{0}^{(n)}
\beta_{1}^{(n)}\cdots\beta_{m_{n}}^{(n)} \cdots 
$

We first show that $\xi$ is a $w$-asymptotic average pseudo orbit.
Denote $M_{0}=0$ and for $n\geq 1$, set $M_{n}=\sum\limits_{i=1}^n (m_i+1)$. One may easily verify that $M_n\leq n(m_n+1)\leq (n+1)m_n$ and hence $M_{n+1}\geq
m_{n+1} \geq 2^{M_n}$. Similarly, $M_n\geq 2^{N_{n+1}}> N_{n+1}$.
\medskip

Fix $n\geq 1$ and an integer $j\in [M_n,M_{n+1})$.
By the definition of $\xi$ we obtain
\begin{align*}
&\frac{1}{j}\sum_{i=0}^{j-1} d(f_{w_i}(x_{i}), x_{i+1})\\
&= \frac{1}{j}\left[
	\sum_{k=1}^{n}\sum_{i=M_{k-1}}^{M_{k}-2}d(f_{w_i}(x_{i}), x_{i+1})+\sum_{i=1}
	^{n}d(f_{w_{M_{i}-1}}(x_{M_{i}-1}), x_{M_{i}})+
	\sum_{i=M_{n}}^{j-1}d(f_{w_i}(x_{i}), x_{i+1})\right]. 
\end{align*}

Since $\frac{1}{m_{k}}\sum\limits_{i=M_{k-1}}^{M_{k}-2}d(f_{w_i}(x_{i}), x_{i+1})<\frac{1}{k}$, we obtain $$\frac{1}{j}
\sum\limits_{k=1}^{n}\sum\limits_{i=M_{k-1}}^{M_{k}-2}d(f_{w_i}(x_{i}), x_{i+1})
+\frac{1}{j}\sum\limits_{i=1}
^{n}d(f_{w_{M_{i}-1}}(x_{M_{i}-1}), x_{M_{i}})
\leq \sum\limits_{k=1}^{n}\frac{m_{k}}{j k}+\frac{n \diam(X)}{2^n}$$
$$\leq \frac{1}{n}+\sum\limits_{k=1}^{n-1}\frac{m_{k}}{j k}+\frac{n \diam(X)}{2^n}\leq \frac{1}{n}+\frac{M_{n-1}}{M_n}+\frac{n \diam(X)}{2^n}\leq \frac{1}{n}+\frac{M_{n-1}}{2^{M_{n-1}}}+\frac{n \diam(X)}{2^n}.$$ 

Additionally, observe that if $j\leq M_n + N_{n+1}$, then
$$
\frac{1}{j}\sum_{i=M_{n}}^{j-1}d(f_{w_i}(x_{i}), x_{i+1})\leq
\frac{N_{n+1}}{j}\diam(X)\leq \frac{N_{n+1}}{2^{N_{n+1}}}\diam(X),
$$

and in the second case of $j> M_n + N_{n+1}$, by the choice of
$N_{n+1}$, we immediately obtain
$$\frac{1}{j}\sum\limits_{i=M_{n}}^{j-1}d(f_{w_i}(x_{i}), x_{i+1})\leq
\frac{1}{j}\sum\limits_{i=M_{n}}^{j-1}d(f_{w_{i-M_n}}(\beta^{(n+1)}_{i-M_n}), \beta^{(n+1)}_{i-M_n+1})
\leq \frac{1}{n+1}.
$$ 
Therefore, $\lim\limits_{j\to \infty} \frac{1}{j}
\sum\limits_{i=0}^{j-1} d(f_{w_i}(x_{i}), x_{i+1})=0$, so indeed $\xi$ is a
$w$-asymptotic average pseudo orbit and the claim holds.  
\medskip

Fix $z\in X$. By \eqref{Exists_z}, for any $n\in \mathbb{N}$ and
point $f_{w}^{M_n}(z)$ we can select $1\leq i_{n} \leq k_{n+1}$ such
that
$$\frac{1}{L^{(n+1)}_{i_{n}}+1}
\sum\limits_{j=M_{n}}^{M_{n}+L_{i_n}^{(n+1)}} d(f_{w}^{j}(z), x_{j})=\frac{1}{L^{(n+1)+1}_{i_n}}
\sum\limits_{j=0}^{L^{(n+1)}_{i_n}}d(f_{w}^{j}(f^{M_{n}}(z)),
\beta_{j}^{(n+1)})\geq \frac{\epsilon}{2}.$$ 

Therefore, 
\begin{align*}
	&\limsup_{n\rightarrow\infty}\frac{1}{n}
	\sum_{j=0}^{n-1}d(f_{w}^{j}(z),x_{j})\ \geq\ \limsup_{n\rightarrow\infty}
	\frac{1}{M_{n}+L_{i_n}^{(n+1)}+1}
	\sum_{j=0}^{M_{n}+L_{i_n}^{(n+1)}}d(f_{w}^{j}(z), x_{j})\\
	&\qquad\qquad\quad\ \geq\ \limsup_{n\rightarrow\infty}
	\frac{L^{(n+1)}_{i_n}+1}{M_{n}+L_{i_n}^{(n+1)}+1}\frac{1}{L^{(n+1)}_{i_n}+1}
	\sum_{j=M_{n}}^{M_{n}+L_{i_n}^{(n+1)}}d(f_{w}^{j}(z), x_{j})\\
	&\qquad\qquad\quad\ \geq\ \frac{\epsilon}{2}\limsup_{n\rightarrow\infty}
	\frac{L^{(n+1)}_{i_n}+1}{M_{n}+L_{i_n}^{(n+1)}+1}
	\ \geq\ \frac{\epsilon}{2}\limsup_{n\rightarrow\infty}
	\frac{2^{M_{n}}}{M_{n}+2^{M_{n}}+1}= \frac{\epsilon}{2}.
\end{align*}

This means that $\xi$ is not $\frac{\epsilon}{2}$-shadowed on 
average by any point in $X$. Hence, $\Gamma$ does not have
the $w$-weak asymptotic average shadowing property.   
\bigskip 

(d)$\Rightarrow$(c) Suppose that $\Gamma$ has the $w$-almost asymptotic average shadowing property. Fix $\epsilon>0$ and let $\delta>0$ be given for $\epsilon$ by the $w$-almost asymptotic average shadowing property. Let $\lbrace x_i\rbrace_{i\in\mathbb{N}}$ be a $(\delta,w)$-ergodic pseudo orbit and let $D=\lbrace i\in\mathbb{N}$ $\mid$ $d(f_{w_i}(x_i),x_{i+1})\geq \delta\rbrace$. Then observe that $\overline{d}(D)=0$ and 

$\limsup\limits_{n\to\infty} \frac{1}{n}\sum\limits_{i=0}^{n-1} d(f_{w_i}(x_i),x_{i+1})$\\
$< \limsup\limits_{n\to\infty} \frac{1}{n} \left(\diam(X)|D\cap \lbrace 0,1,\ldots,n-1\rbrace|+{n\delta}\right)$\\
$= \diam(X)\overline{d}(D)+\delta=\delta.$  
\medskip

Thus, $\lbrace x_i\rbrace_{i\in\mathbb{N}}$ is a $(\delta,w)$-weak asymptotic average pseudo orbit and hence, by the $w$-almost asymptotic average shadowing property there is $z\in X$ such that $$\limsup\limits_{n\to\infty} \frac{1}{n}\sum\limits_{i=0}^{n-1} d(f_{w}^i(z),x_i)<\epsilon.$$ 
This shows that $\Gamma$ has the $w$-mean ergodic shadowing property.     
\bigskip

(c)$\Rightarrow$(b) Suppose that $\Gamma$ has the $w$-mean ergodic shadowing property. Fix $\epsilon>0$ and let $\delta>0$ be given by the $w$-mean ergodic shadowing property. Consider a $w$-asymptotic average pseudo orbit $\lbrace x_i\rbrace_{i\in\mathbb{N}}$, i.e., $\lim\limits_{n\to\infty} \frac{1}{n}\sum\limits_{i=0}^{\infty} d(f_{w_i}(x_i),x_{i+1})=0$. By the Remark \ref{AE}, there exists a subset $J\subset\mathbb{N}$ of density zero such that $\lim\limits_{n\to \infty} d(f_{w_i}(x_i),x_{i+1})=0$, provided $n\notin J$ which implies that $\lbrace x_i\rbrace_{i\in\mathbb{N}}$ is a $(\delta,w)$-ergodic pseudo orbit. By the $w$-mean ergodic shadowing property there exists $z\in X$ such that $\limsup\limits_{n\to\infty} \frac{1}{n}\sum\limits_{i=0}^{n-1} d(f_w^i(z),x_i)<\epsilon$. This shows that $\Gamma$ has the $w$-weak asymptotic average shadowing property.     
\medskip

We now obtain the equivalence of (c) and (e) by proving that (c)$\Rightarrow$(e) and (e)$\Rightarrow$(c). 
\medskip

(c)$\Rightarrow$(e) Suppose that $\Gamma$ has the $w$-mean ergodic shadowing property. Let $\alpha_0\in (0,1)$ be fixed and we want to show that $\Gamma$ satisfies the $w$-$M_{\alpha_0}$-shadowing property. Let $\epsilon_0\in (0,1)$ be such that $\epsilon_0=1-\alpha_0$, and let $\delta_0>0$ be given for this $\epsilon_0$ by the $w$-mean ergodic shadowing property of $\Gamma$. Then for any $(\delta_0,w)$-ergodic pseudo orbit $\xi=\lbrace x_i\rbrace_{i\in\mathbb{N}}$ there exists $x\in X$ such that $\overline{d}(B^c(x,\xi,w,\epsilon))<\epsilon_0$. Since $\underline{d}(A)=1-\overline{d}(A^c)$ for any $A\subset\mathbb{N}$, we have that $\underline{d}(B(x,\xi,w,\epsilon_0))>1-\epsilon_0=\alpha_0$. If $\epsilon>\epsilon_0$, then $\underline{d}(B(x,\xi,w,\epsilon))\geq\underline{d}(B(x,\xi,w,\epsilon_0))>\alpha_0$. If $\epsilon<\epsilon_0$, then by the $w$-mean ergodic shadowing property of $\Gamma$ there exists $\delta>0$ such that every $(\delta,w)$-ergodic pseudo orbit $\xi=\lbrace x_i\rbrace_{i\in\mathbb{N}}$ satisfies that $\overline{d}(B^c(x,\xi,w,\epsilon))<\epsilon$ for some $x\in X$. So, $\underline{d}(B(x,\xi,w,\epsilon))=1-\overline{d}(B^c(x,\xi,w,\epsilon))>1-\epsilon>1-\epsilon_0=\alpha_0$. Therefore, we conclude that $\Gamma$ has the $w$-$M_{\alpha}$-shadowing property for every $\alpha\in (0,1)$.                
\bigskip 

(e)$\Rightarrow$(c) Suppose that $\Gamma$ has the $w$-$M_{\alpha}$-shadowing property for every $\alpha\in (0,1)$. Let $\epsilon_0\in (0,1)$ be fixed and $\epsilon_0=1-\alpha_0$ for some fixed $\alpha_0\in (0,1)$. For this $\epsilon_0$ there exists $\delta_0>0$ by the $w$-$M_{\alpha_0}$-shadowing property of $\Gamma$. Let $\xi=\lbrace x_i\rbrace_{i\in\mathbb{N}}$ be a $(\delta_0,w)$-ergodic pseudo orbit and hence, there exists $x\in X$ such that $\underline{d}(B(x,\xi,w,\epsilon_0))>\alpha_0$. Since $\underline{d}(A)=1-\overline{d}(A^c)$ for any $A\subset\mathbb{N}$, therefore $\overline{d}(B^c(x,\xi,w,\epsilon_0))<1-\alpha_0=\epsilon_0$. If $\epsilon\geq 1$, then choose the same $\delta_0$ as was given for $\epsilon_0$ by the $w$-$M_{\alpha_0}$-shadowing property of $\Gamma$. Then any $(\delta_0,w)$-ergodic pseudo orbit $\xi=\lbrace x_i\rbrace_{i\in\mathbb{N}}$ satisfies that $\overline{d}(B^c(x,\xi,w,\epsilon))\leq\overline{d}(B^c(x,\xi,w,\epsilon_0))<1-\alpha_0=\epsilon_0<\epsilon$. Therefore, by the Lemma \ref{2.1.1}, we conclude that $\Gamma$ has the $w$-mean ergodic shadowing property.
\medskip  

Finally, we prove that (a)$\Rightarrow$(b), (c)$\Rightarrow$(f) and (f)$\Rightarrow$(a) to complete our proof of the theorem.  
\medskip

(a)$\Rightarrow$(b) Suppose that $\Gamma$ has the $w$-average shadowing property. Fix an $\epsilon>0$ and a $w$-asymptotic average pseudo orbit $\lbrace x_i\rbrace_{i\in\mathbb{N}}$. By the $w$-average shadowing property of $\Gamma$, there exists $\delta>0$ such that every $(\delta,w)$-average pseudo orbit is $(\epsilon,w)$-shadowed on average by some point in $X$. By the Remark \ref{AE}, observe that $\lbrace x_i\rbrace_{i\in\mathbb{N}}$ is a $(\frac{\delta}{2},w)$-ergodic pseudo orbit. By the Lemma \ref{epoapo}, there is a $(\delta,w)$-average pseudo orbit $\lbrace y_i\rbrace_{i\in\mathbb{N}}$ such that $E=\lbrace i\in\mathbb{N}$ $\mid$ $x_i\neq y_i\rbrace$ has density zero. So there exists $z\in X$ such that 
$\limsup\limits_{n\to \infty} \frac{1}{n}\sum\limits_{i=0}^{n-1} d(f_{w}^i(z),y_i)<\epsilon.$ 
Then, 
$$\limsup\limits_{n\to \infty} \frac{1}{n}\sum\limits_{i=0}^{n-1} d(f_{w}^i(z),x_i)\leq\limsup\limits_{n\to\infty} \frac{1}{n}\sum\limits_{i=0}^{n-1} \left(d(f_{w}^i(z),y_i)+d(x_i,y_i)\right)$$  

$$<\epsilon+\limsup\limits_{n\to\infty} \frac{1}{n} \sum\limits_{i=0}^{n-1} d(x_i,y_i)\\
\leq\epsilon+\diam(X)\overline{d}(E)=\epsilon.$$ This shows that $\Gamma$ has the $w$-weak asymptotic average shadowing property.  
\bigskip

(c)$\Rightarrow$(f) Suppose that $\Gamma$ has the $w$-mean ergodic shadowing property. We want to show that $\Gamma$ has the $w$-asymptotic average shadowing property. Let us fix $\epsilon>0$ and let $\lbrace x_i\rbrace_{i\in\mathbb{N}}$ be a $w$-asymptotic average pseudo orbit. Then by the Remark \ref{AE}, $\lbrace x_i\rbrace_{i\in\mathbb{N}}$ is a $(\delta,w)$-ergodic pseudo orbit for any $\delta>0$. Therefore, we can find a sequence $\lbrace z_m\rbrace_{m\in\mathbb{N}}$ in $X$ such that $$\limsup\limits_{n\to \infty} \frac{1}{n}\sum\limits_{i=0}^{n-1} d(f_{w}^i(z_m),x_i)<\frac{\epsilon}{2^m}.$$ 

Without loss of generality we can assume that $\lbrace z_m\rbrace_{m\in\mathbb{N}}$ converges to $z$ in $X$. We claim that $\limsup\limits_{n\to\infty}\frac{1}{n}\sum\limits_{i=0}^{n-1} d(f_{w}^i(z),x_i)=0$. 

Since $z_m$ converges to $z$ in $X$, there exists a subsequence $\lbrace z_{m_k}\rbrace_{k\in\mathbb{N}}$ of $z_m$ such that $d(f_{w}^i(z_{m_k}),f_{w}^i(z))<\frac{\epsilon}{2^k}$. Then by the triangle inequality, we get that $d(f_{w}^i(z),x_i)\leq d(f_{w}^i(z_{m_k}),f_{w}^i(z))+d(f_{w}^i(z_{m_k}),x_i)<\frac{\epsilon}{2^k}+\frac{\epsilon}{2^{m_k}}\leq \frac{\epsilon}{2^k}+\frac{\epsilon}{2^k}.$ So, $\frac{1}{n}\sum\limits_{i=0}^{n-1} d(f_{w}^i(z),x_i)\leq \frac{\epsilon}{2^{k-1}}$ and hence, $\limsup\limits_{n\to\infty} \frac{1}{n}\sum\limits_{i=0}^{n-1} d(f_{w}^i(z),x_i)\leq \frac{\epsilon}{2^{k-1}}$ which proves the claim and completes the proof.    
\bigskip

(f)$\Rightarrow$(a) This implication was proved in Theorem 1.2 of \cite{ZWFL}. 
\medskip

\begin{example}
Let \(D\) be the closed unit disk in \(\mathbb{R}^2\) and \(f_0,f_1,f_2:D\to D\) be given by $f_0(x,y)=(x,y)$, $f_1(x,y) = (y,x)$ and $f_2(x,y) = \left(\frac{x}{2},\frac{y}{2}\right)$, respectively. We prove that $\Gamma=\Gamma(f_0,f_1,f_2)$ has the $w$-asymptotic average shadowing property, where $w$ is the sequence $\lbrace w_0w_1w_2w_3\cdots\rbrace$ with $w_{2i}=1$ for $i\in\mathbb{N}$ and $w_{2i+1}=2$ for $i\in\mathbb{N}$.  
\medskip
	
Let $\lbrace (x_i,y_i)\rbrace_{i\in\mathbb{N}}$ be a $w$-asymptotic average pseudo orbit for $\Gamma$. For $i\in\mathbb{N}$, if we set $\alpha_i=d(f_{w_i}(x_i,y_i),(x_{i+1},y_{i+1}))$ then $\lim\limits_{n\to\infty} \frac{1}{n}\sum\limits_{i=0}^{n-1}\alpha_i=0$. Let $\lbrace (a_i,b_i)\rbrace_{i\in\mathbb{N}}$ be an orbit such that $(a_0,b_0)\in D$ and $(a_{i+1},b_{i+1})=f_{w_i}((a_i,b_i))$ for all $i\in\mathbb{N}$. If we show that $\lim\limits_{n\to\infty}\frac{1}{n}\sum\limits_{i=0}^{n-1}d((a_i,b_i),(x_i,y_i))=0$, then we are through. Let us set $M=d((a_0,b_0),(x_0,y_0))$. Then, 
$$d((a_1,b_1),(x_1,y_1))\leq d((x_1,y_1),f_{w_0}(x_0,y_0))+d(f_{w_0}(x_0,y_0),f_{w_0}(a_0,b_0))\leq \alpha_0+M.$$
$$d((a_2,b_2),(x_2,y_2))\leq d((x_2,y_2),f_{w_1}(x_1,y_1))+d(f_{w_1}(x_1,y_1),f_{w_1}(a_1,b_1))$$
$$\leq \alpha_1+d((\frac{x_1}{2},\frac{y_1}{2}),(\frac{a_1}{2},\frac{b_1}{2}))\leq \alpha_1+\frac{1}{2}d((x_1,y_1),(a_1,b_1))\leq \alpha_1+\frac{1}{2}(\alpha_0+M).$$ 
$$d((a_3,b_3),(x_3,y_3))\leq d((x_3,y_3),f_{w_2}(x_2,y_2))+d(f_{w_2}(x_2,y_2),f_{w_2}(a_2,b_2))$$
$$\leq \alpha_2+d((y_2,x_2),(b_2,a_2))\leq \alpha_2+d((x_2,y_2),(a_2,b_2)\leq \alpha_2+(\alpha_1+\frac{1}{2}\alpha_0+\frac{1}{2}M).$$ 
$$d((a_4,b_4),(x_4,y_4))\leq d((x_4,y_4),f_{w_3}(x_3,y_3))+d(f_{w_3}(x_3,y_3),f_{w_3}(a_3,b_3))$$
$$\leq \alpha_3+d((\frac{x_3}{2},\frac{y_3}{2}),(\frac{a_3}{2},\frac{b_3}{2}))\leq \alpha_3+\frac{1}{2}d((x_3,y_3),(a_3,b_3))$$  
$$\leq\alpha_3+\frac{1}{2}(\alpha_2+\alpha_1+\frac{1}{2}\alpha_0+\frac{1}{2}M).$$
	
A careful observation helps us to write the following recurrent inequalities:  
	
$d((a_k,b_k),(x_k,y_k))\leq \alpha_{k-1}+\alpha_{k-2}+\frac{1}{2}(\alpha_{k-3}+\alpha_{k-4})+\cdots+\frac{1}{2^{k-2}}(\alpha_0+M)$ for $k$ odd and $k\geq 5$.   
	
$d((a_k,b_k),(x_k,y_k))\leq \alpha_{k-1}+\frac{1}{2}(\alpha_{k-2}+\alpha_{k-3})+\frac{1}{2^2}(\alpha_{k-4}+\alpha_{k-5})+\cdots+\frac{1}{2^{k-2}}(\alpha_0+M))$ for even $k$ and $k\geq 6$.  
	
Then, $\sum\limits_{i=0}^{n-1}d((a_i,b_i),(x_i,y_i))\leq [M(1+1+\frac{1}{2}+\frac{1}{2}+\frac{1}{2^2}+\frac{1}{2^2}+\frac{1}{2^3}+\frac{1}{2^3}+\frac{1}{2^4}+\frac{1}{2^4}+\cdots+\frac{1}{2^{\frac{n-1}{2}}})]+[\alpha_0(1+1+\frac{1}{2}+\frac{1}{2}+\frac{1}{2^2}+\frac{1}{2^2}+\frac{1}{2^3}+\frac{1}{2^3}+\frac{1}{2^4}+\frac{1}{2^4}+\cdots+\frac{1}{2^{\frac{n-1}{2}}})]+[\alpha_1(1+1+\frac{1}{2}+\frac{1}{2}+\frac{1}{2^2}+\frac{1}{2^2}+\frac{1}{2^3}+\frac{1}{2^3}+\frac{1}{2^4}+\frac{1}{2^4}+\cdots+\frac{1}{2^{\frac{n-1}{2}}})]+\cdots+[\alpha_{n-1}(1+1+\frac{1}{2}+\frac{1}{2}+\frac{1}{2^2}+\frac{1}{2^2}+\frac{1}{2^3}+\frac{1}{2^3}+\frac{1}{2^4}+\frac{1}{2^4}+\cdots+\frac{1}{2^{\frac{n-1}{2}}})]\leq 4\left(M+\sum\limits_{i=0}^{n-1}\alpha_i\right).$   
	\medskip
	
Therefore, $\lim\limits_{n\to\infty} \frac{1}{n}\sum\limits_{i=0}^{n-1}d((a_i,b_i),(x_i,y_i))\leq \lim\limits_{n\to \infty} \frac{1}{n}\left(4\left(M+\sum\limits_{i=0}^{n-1}\alpha_i\right)\right)=0$,  
\medskip
	
and this proves that $\Gamma$ has the $w$-asymptotic average shadowing property and hence, we conclude that $\Gamma$ satisfies any of the variants of the shadowing property listed in the Theorem \ref{MT} above.       
\end{example}


\begin{thebibliography}{11}
\bibitem{AH} N. Aoki and K. Hiraide, Topological theory of dynamical systems, North-Holland Mathematical Library, vol. 52, North-Holland Publishing Co., Amsterdam, (1994). 
\bibitem{B} M. L. Blank, Metric properties of $\epsilon$-trajectories of dynamical systems with stochastic behaviour, Ergod. Theory Dyn. Syst., 8 (1988), 365-378. 
\bibitem{B2015} A. Z. Bahabadi, Shadowing and average shadowing properties for iterated function systems, Georgian Math. J., 22 (2015), 179-184. 
\bibitem{CT} M. E. Can, A. Trilles, The equivalence of asymptotic average shadowing and vague specification properties and its consequences, arXiv:2411.01556v1 [math.DS] 3 Nov 2024. 
\bibitem{DD} P. K. Das, T. Das, Mean ergodic shadowing, Bull. Braz. Math. Soc., 54 (2023). 
\bibitem{DH} D. A. Dastjerdi, M. Hosseini, Sub-shadowings, Nonlinear Analysis, 72 (2010), 3759-3766. 
\bibitem{G} R. Gu, The asymptotic average shadowing property and transitivity, Nonlinear analysis, 67 (2007), 1680-1689. 
\bibitem{GD} M. Garg, R. Das, Relations of the almost average shadowing property with ergodicity and proximality, Chaos, solitons and fractals, 91 (2016), 430-433. 
\bibitem{KKO} M. Kulczycki, D. Kwietniak, P. Oprocha, On almost specification and average shadowing properties, Fundamenta Mathematicae, 224 (2014), 241-278. 
\bibitem{S} Z. Shabani, Ergodic shadowing of semigroup actions, Bull. Iran. Math. Soc., 46 (2020), 303-321. 
\bibitem{ST} Z. Shabani, H. Tajadodi, Various types of shadowing and specification properties of semigroup actions, U. P. B. Sci. Bull., Series A, 82 (2020), 107-120.  
\bibitem{N} M. F. Nia, Parameterized IFS with the asymptotic average shadowing property, Qual. Theory Dyn. Syst., 15 (2016), 367-381. 
\bibitem{NWS} Y. Niu, Y. Wang, S. Su, The asymptotic average shadowing property and strong ergodicity, Chaos, solitons and fractals, 53 (2013), 34-38.   
\bibitem{W} P. Walter, An introduction to ergodic theory, Springer-Verlag, New York, 1982. 
\bibitem{WOC} X. Wu, P. Oprocha, G. Chen, On various definitions of shadowing with average error in tracing, Nonlinearity, 29 (2016), 1942-1972. 
\bibitem{ZWFL} P. Zeng, H. Wang, J. Fang, X. Liao, The asymptotic average shadowing prperty for interated function systems, Bull. Malays. Math. Sci. Soc., 45 (2022), 19-35. 


\end{thebibliography}
\end{document}